\title{Sidon sets and statistics of the ElGamal function}
\author{Lucas Boppr\'e Niehues, Joachim von zur Gathen,\\Lucas Pandolfo Perin, Ana Zumalac\'arregui}
\date{\today}
\newtheorem{thm}{Theorem}
\newtheorem{lem}{Lemma}
\numberwithin{equation}{section} \numberwithin{thm}{section}
\numberwithin{lem}{section}
\numberwithin{problem}{section}
\numberwithin{prop}{section}
\numberwithin{cor}{section}
\numberwithin{conj}{section}
\numberwithin{obs}{section}
\newcommand{\Z}{{\mathbb Z}}
\newcommand{\F}{{\mathbb F}}
\newcommand{\twodots}{\ .. \ }
\begin{document}

\maketitle

\begin{abstract}
In the ElGamal signature and encryption schemes, an element $x$ of the underlying group
$G = \Z_p^\times = \{1, \ldots, p-1 \}$ for a prime $p$
is also considered as an exponent, for example in $g^x$, where $g$ is a generator of G.
This \emph{ElGamal map} $x \mapsto g^x$ is poorly understood,
and one may wonder whether it has some randomness properties.
The underlying map from $G$ to $\Z_{p-1}$ with $x \mapsto x$ is trivial from a
computer science point of view, but does not seem to have any mathematical structure.

This work presents two pieces of evidence for randomness.
Firstly, experiments with small primes suggest that the map behaves like a uniformly
random permutation with respect to two properties that we consider.
Secondly, the theory of Sidon sets shows that the graph of this map is
equidistributed in a suitable sense.

It remains an open question to prove more randomness properties, 
for example, that the ElGamal map is pseudorandom.
\end{abstract}

\section{Introduction}
In the ElGamal signature scheme \cite{elgamal} with parameter $n$,
 we take an $n$-bit number $d$ and a cyclic group $G = \langle g \rangle$  of order $d$.
In ElGamal's original proposal,  $p$ is an $n$-bit prime number, 
$G = \mathbb{Z}_p^\times = \{1, \ldots, p-1\}$, $d = p-1$, and 
$\mathbb{Z}_d = \{1, \ldots, d\}$ is the \emph{exponent group}.
More commonly, one takes $\mathbb{Z}_d = \{0, \ldots, d-1\}$, 
but both are valid set of representatives.
We let $g$ be a generator of $G$, so that $G = \{g^b \colon b \in \Z_d \}$.
The object of this paper is to investigate randomness properties of the
\emph{ElGamal map} from $G$ to $G$ with $x \mapsto g^x$,
where $x \in \Z_d$ on the right hand side.
Since $g^x$ determines $x$ uniquely, this is a permutation of $G$.
If we consider $x \in \Z_d$ on the left hand side, it is the discrete exponentiation
map in base $g$.

A secret global key $a \in \Z_d$ and session key $k \in \Z_d^\times$ are chosen
uniformly at random, and their public versions $A = g^a$ and $K = g^k$ in $G$
are published. The signature of a message $m \in \Z_d$ is $(K, b)$ with
$b = k^{-1} (m-aK) \in \Z_d$.

The private key is easily broken if discrete
logarithms in $G$ can be calculated efficiently; see Figure \ref{fig:dexpdlog}.
For more details, see von zur Gathen \cite{cryptoschool}, Sections 8.2 and 9.8.

\begin{figure}[ht!]
\centering
\includegraphics[width=\textwidth]{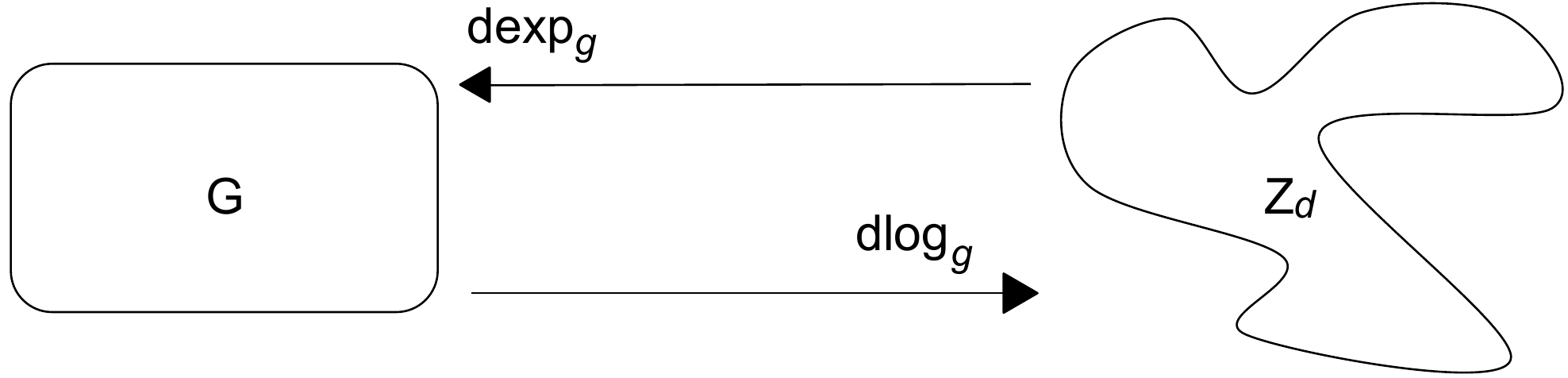}
\caption{Public group and exponent group in ElGamal Signature scheme}
\label{fig:dexpdlog}
\end{figure}

The Decisional Diffie-Hellman (DDH) problem is to decide whether, given a triple
$(x,y,z) \in G^3$, there exist $a, b, c \in \Z_d$ so that $x=g^a$, $y=g^b$, and $z = g^{ab}$;
then $(x,y,z)$ is a \emph{Diffie-Hellman triple}.

If such triples are indistinguishable from uniformly random triples, for uniformly random $a$
and $b$, then the ElGamal encryption scheme is indistinguishable by public key
only attacks.
The results of Canetti, Friedlander, Konyagin, Larsen, Lieman, and Shparlinski
\cite{canettiDDH}, indicate that the most significant and least 
significant bits of each element in DDH triples are indeed distributed uniformly.
Do pairs $(x, g^x)$, for uniformly random $x$, exhibit a similar behavior?

This paper first gives some experimental evidence in favor of this.
We take some small primes, just above 1000, and consider two parameters
of permutations: the number of cycles and the number of $k$-cycles for given $k$.
Their averages for random permutations are well-known, and we find that the
average values for the ElGamal function are reasonably close to those numbers.
Secondly, we use the theory of Sidon sets to prove an equidistributional property
with appropriate parameters; see also Cobeli, V\^aj\^aitu \&\ Zaharescu
\cite{CobVajZah} for a different approach to show equidistribution.

Martins \&\ Panario \cite{marpan16} study similar questions, but for general polynomials that need not
be permutations, and for different parameters.
Konyagin, Luca,  Mans, Mathieson, Sha \&\ Shparlinski \cite{konluc16} consider 
enumerative and algorithmic questions about (non-)isomorphic functional graphs,
 and Mans, Sha, Shparlinski \&\ Sutantyo \cite{mansha17}
provide statistics, conjectures, and results about cycle lengths of quadratic polynomials
over finite prime fields. Kurlberg, Luca \&\ Shparlinski
\cite{kurluc15}
and  Felix \&\ Kurlberg \cite{felkur16} deal with fixed points of the map $x \mapsto x^x$ modulo primes.

% We are not aware of previously published results on this subject.

\section{Experiments in $\mathbb{F}_{p}$}

The pictorial representation in Figure \ref{fig:smiley}
shows the cycle structure of the permutation $x \mapsto g^x$ in $\mathbb{F}_p$ with $p=1009$ and
$g =11$, the smallest generator. Each circle corresponds
to a cycle, whose length is proportional to the circle's circumference.
Next, Figure \ref{fig:smileys} shows together 12 permutations
in $\mathbb{F}_p$ using the 12 smallest generators of $\F_p$.
\begin{figure}[ht!]
\centering
\includegraphics[scale=0.3]{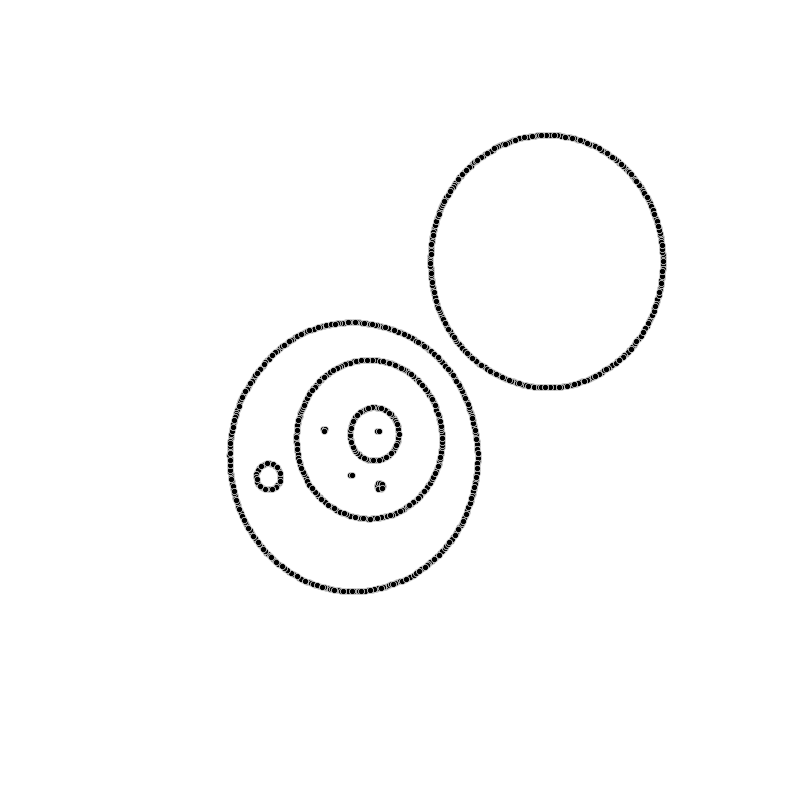}
\caption{Representation of the cycles generated with $g=11$ in $\mathbb{F}_{1009}$}
\label{fig:smiley}
\end{figure}

In the following subsections, we take the cycle structures for all $\phi(1008) = 288$ generators of $\F_{1009}$,
and then of all generators for the first fifty primes larger than 1000.
We calculate the averages for the number of cycles and the number of $k$-cycles and
compare them to the known values for random permutations.

\begin{figure}[ht!]
\centering

\begin{subfigure}{.245\textwidth}
  \centering
  \includegraphics[width=\linewidth]{graph_0011_1009}
\end{subfigure}%
\begin{subfigure}{.245\textwidth}
  \centering
\includegraphics[width=\linewidth]{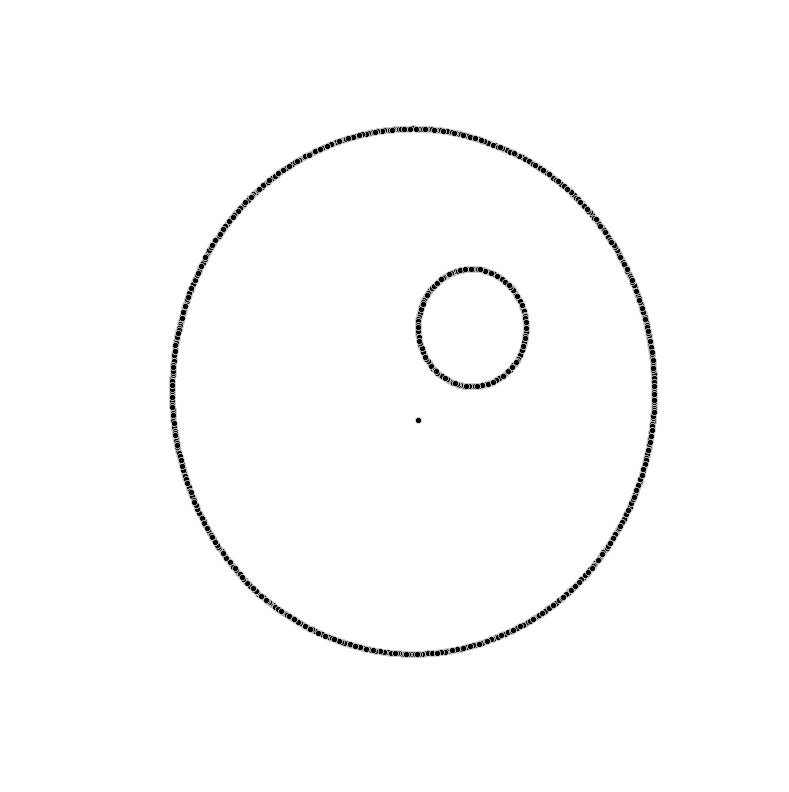}
\end{subfigure}
\begin{subfigure}{.245\textwidth}
  \centering
  \includegraphics[width=\linewidth]{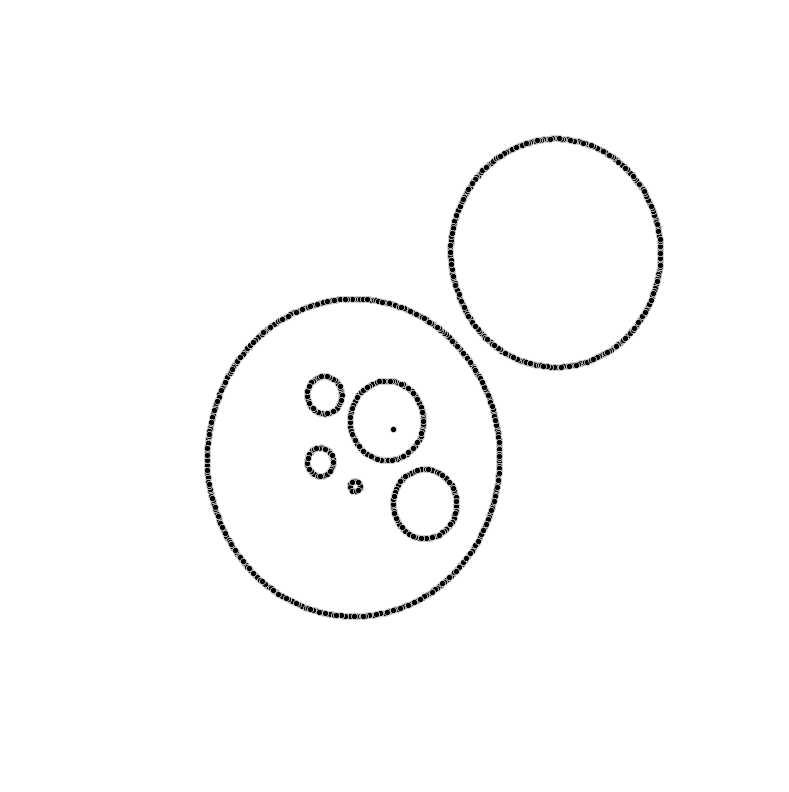}
\end{subfigure}%
\begin{subfigure}{.245\textwidth}
  \centering
  \includegraphics[width=\linewidth]{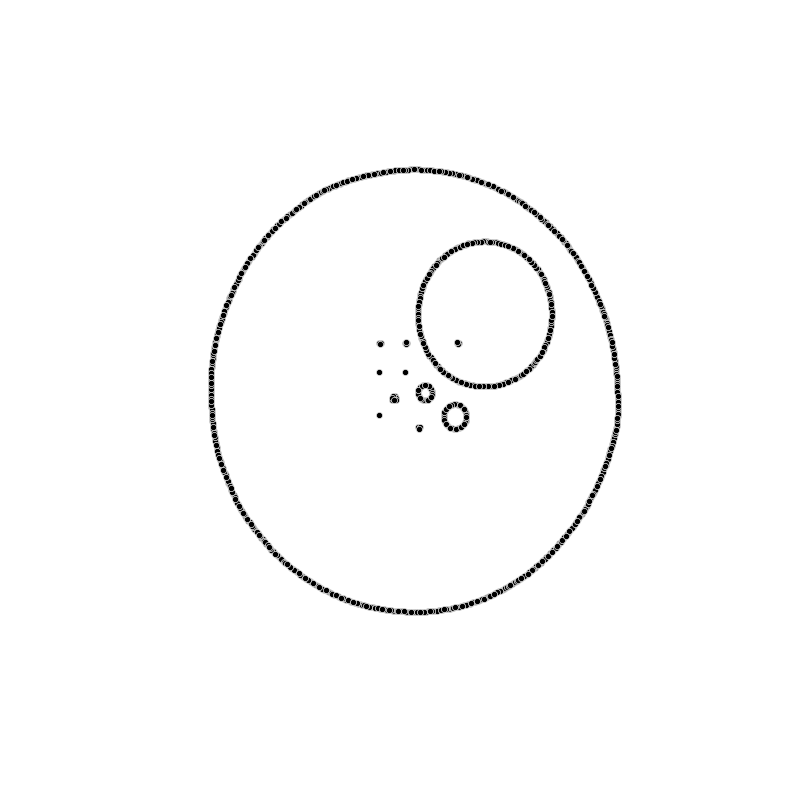}
\end{subfigure}%

\begin{subfigure}{.245\textwidth}
  \centering
\includegraphics[width=\linewidth]{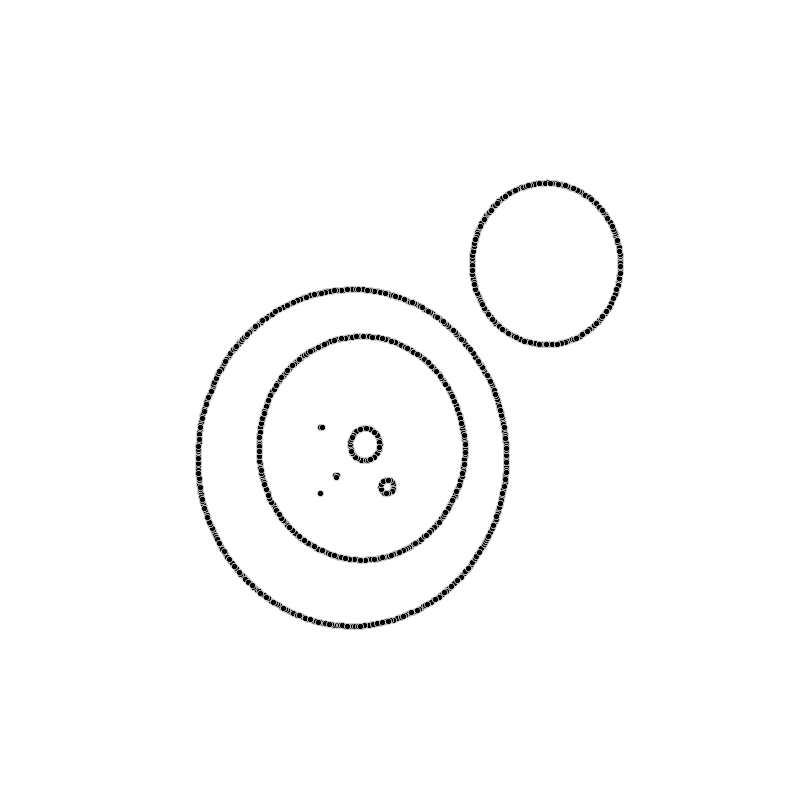}
\end{subfigure}
\begin{subfigure}{.245\textwidth}
  \centering
  \includegraphics[width=\linewidth]{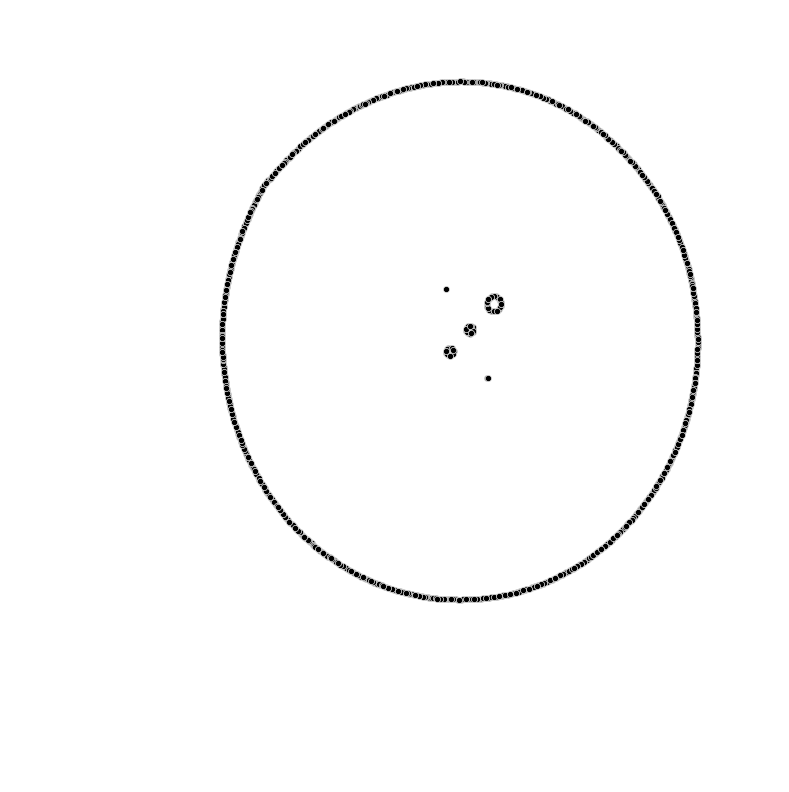}
\end{subfigure}%
\begin{subfigure}{.245\textwidth}
  \centering
  \includegraphics[width=\linewidth]{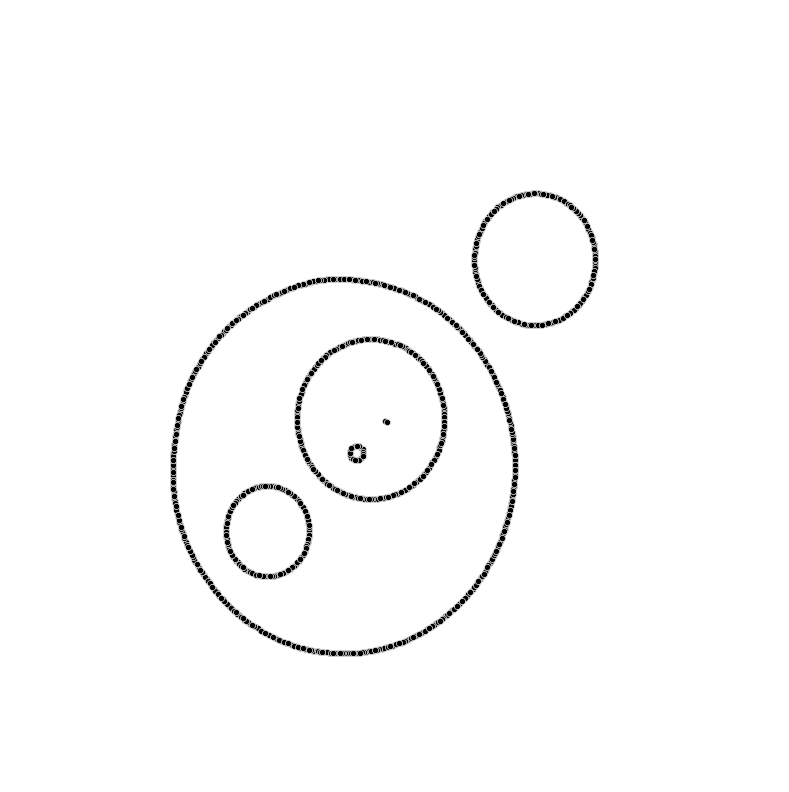}
\end{subfigure}%
\begin{subfigure}{.245\textwidth}
  \centering
\includegraphics[width=\linewidth]{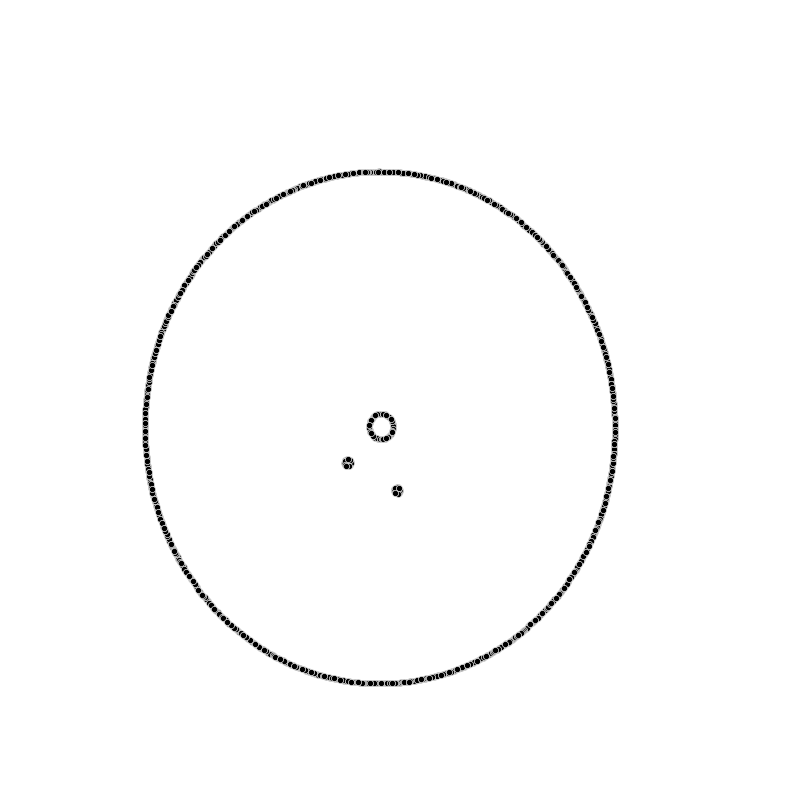}
\end{subfigure}

\begin{subfigure}{.245\textwidth}
  \centering
  \includegraphics[width=\linewidth]{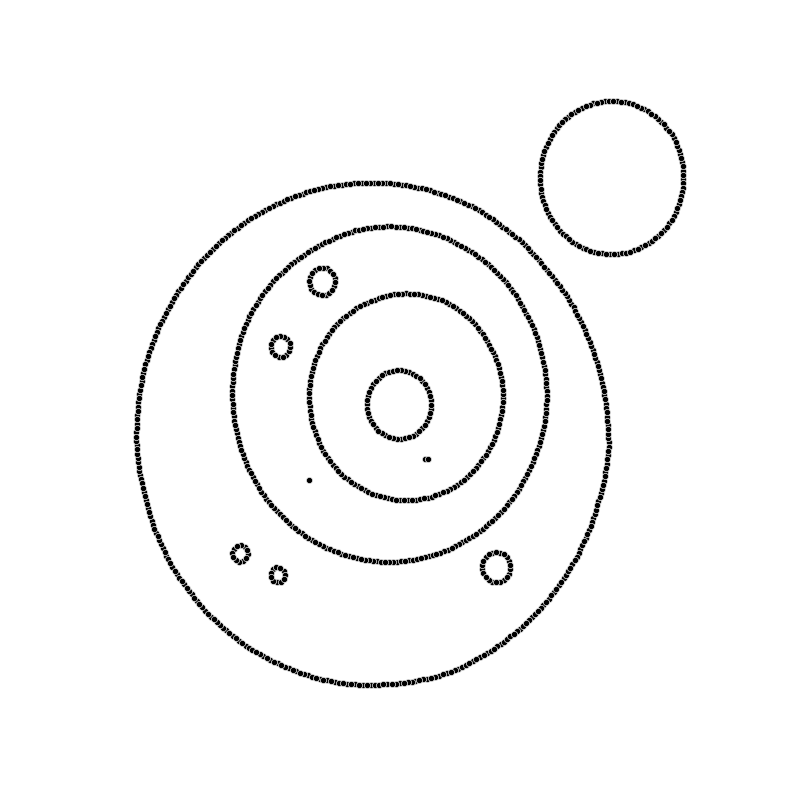}
\end{subfigure}%
\begin{subfigure}{.245\textwidth}
  \centering
  \includegraphics[width=\linewidth]{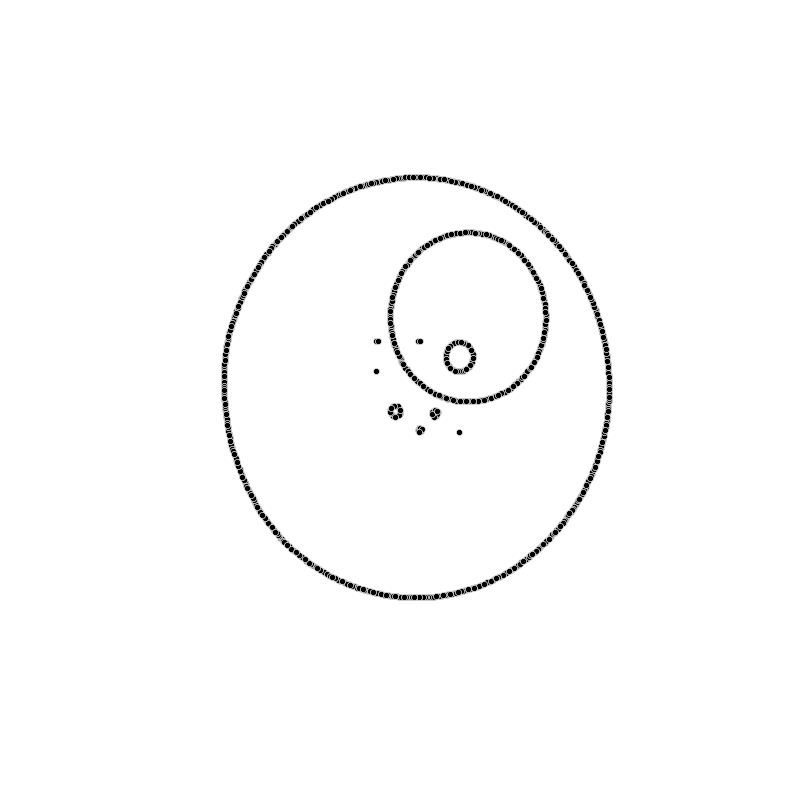}
\end{subfigure}%
\begin{subfigure}{.245\textwidth}
  \centering
\includegraphics[width=\linewidth]{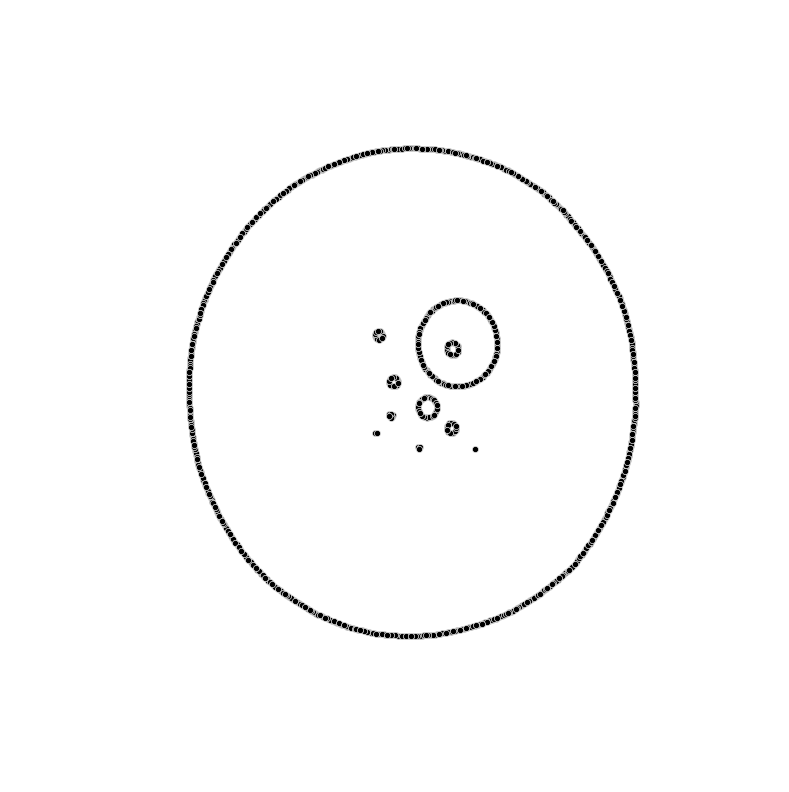}
\end{subfigure}
\begin{subfigure}{.245\textwidth}
  \centering
  \includegraphics[width=\linewidth]{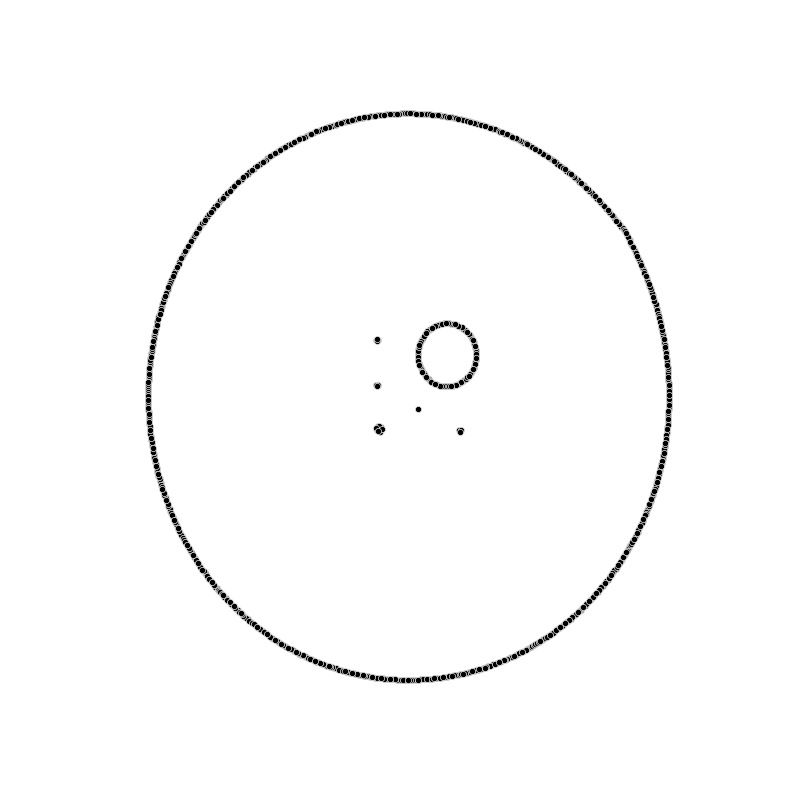}
\end{subfigure}%
 
\caption{Graphical presentation of permutations of $g^{x}$ in $\mathbb{F}_{1009}$}
\label{fig:smileys}
\end{figure}

\subsection{Number of cycles in permutations}

We study in detail the number of cycles in the permutations. The number of
permutations in $S_n$ with $c$ cycles equals the Stirling number $s(n,c)$ of
the first kind, and thus is the coefficient of $x^c$ in the falling
factorial $x^{\underline n} = x \cdot (x-1) \cdots (x-n+1)$. Figure \ref{fig:randomPermCycles}
shows the distribution of the number of cycles for  uniformly random
permutations of $n$ elements, that is, the fraction $s(n,c)/n!$ (in percent) for
$n=1009$ and  $1 \leq c\leq 20$, as a continuous line. In the same figure, the 
experimental statistics for 288 permutations chosen uniformly at random are 
presented as dots. This was done in order to calibrate our expectations.
 Theory and experiments match quite well.
 
Figure \ref{fig:elgamalPermCycles} shows the same continuous line,
but now the dots represent the counts for the 288 generators of $\F_{1009}$.
The result looks quite similar to  Figure~\ref{fig:randomPermCycles}.

\begin{figure}[ht!]
\centering
\includegraphics[width=\textwidth]{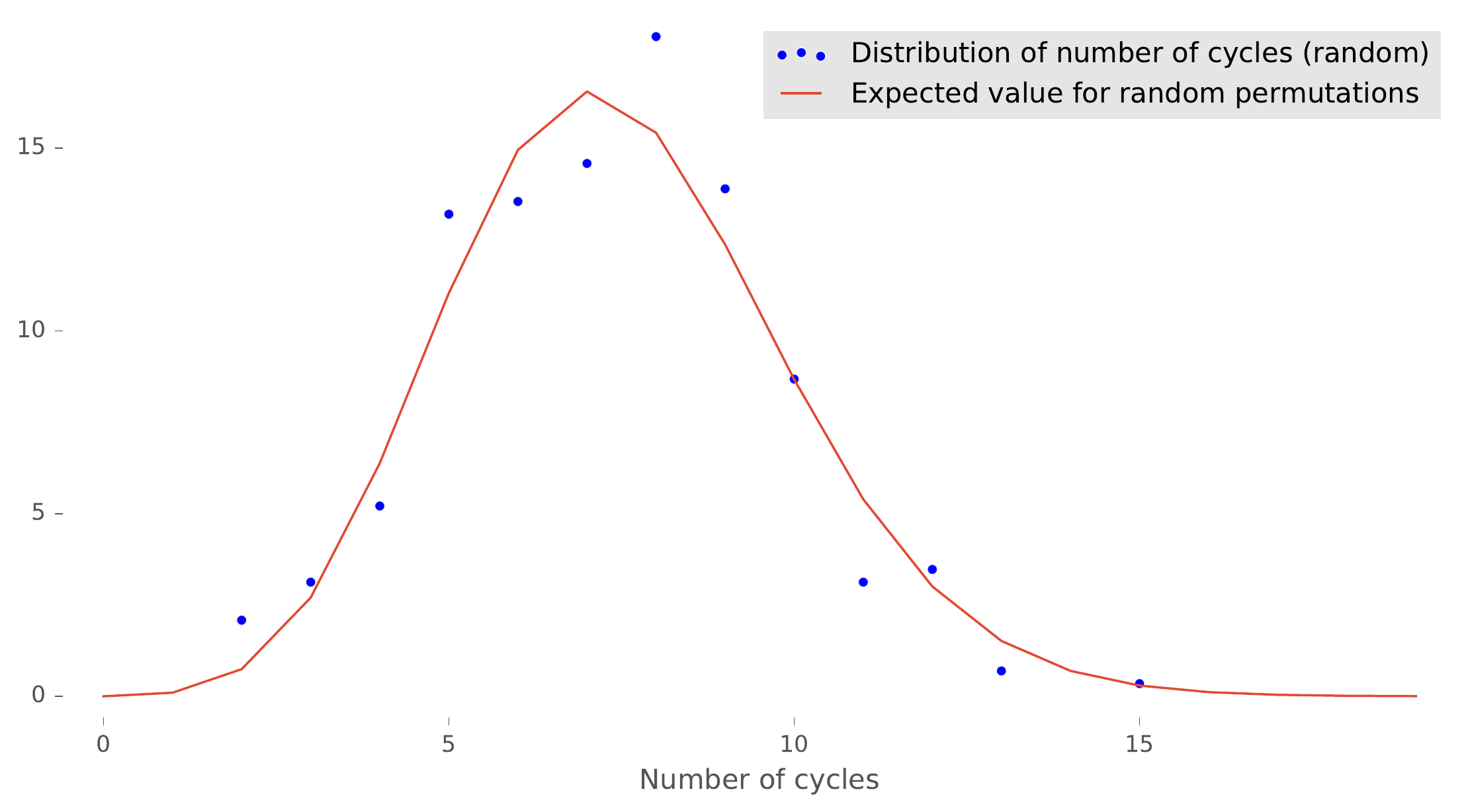}
\caption{Distribution in percent of number of cycles for 288 random permutations in $S_{1009}$.}
\label{fig:randomPermCycles}
\end{figure}

\begin{figure}[ht!]
\centering
\includegraphics[width=\textwidth]{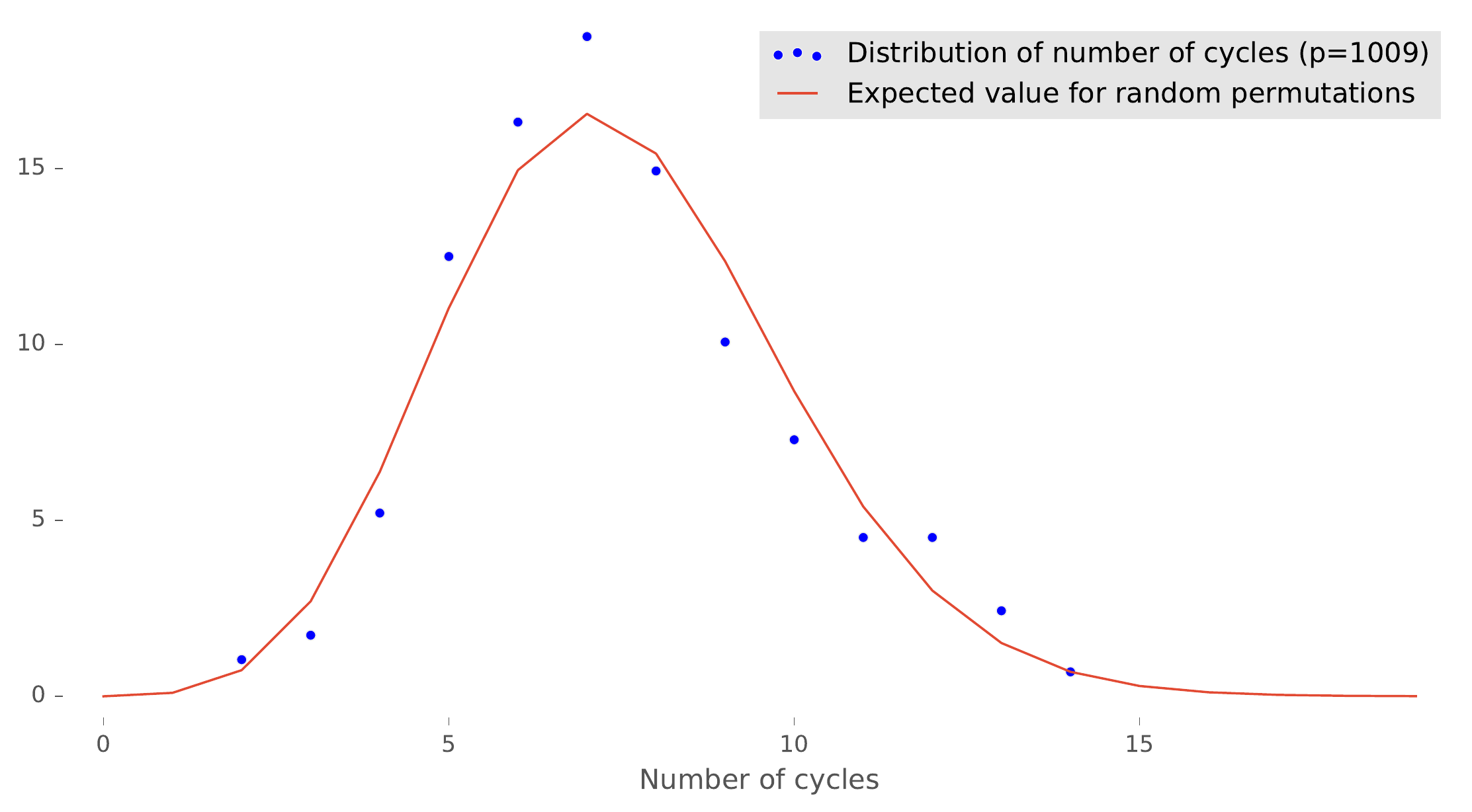}
\caption{Distribution of number of cycles in ElGamal functions on $\mathbb{F}_{1009}$}
\label{fig:elgamalPermCycles}
\end{figure}

\subsection{Number of $k$-cycles in permutations}

Given a random permutation of elements, the number of cycles
of length $k$ is on average $1/k$ \cite{genfun}. In Figure \ref{fig:kcycles}, we 
give the average number of cycles of length $k$ for all $288$ generators of
the multiplicative group in dots.
The experimental results are reasonably close to the theoretical values.

\begin{figure}[ht!]
\centering
\includegraphics[width=\textwidth]{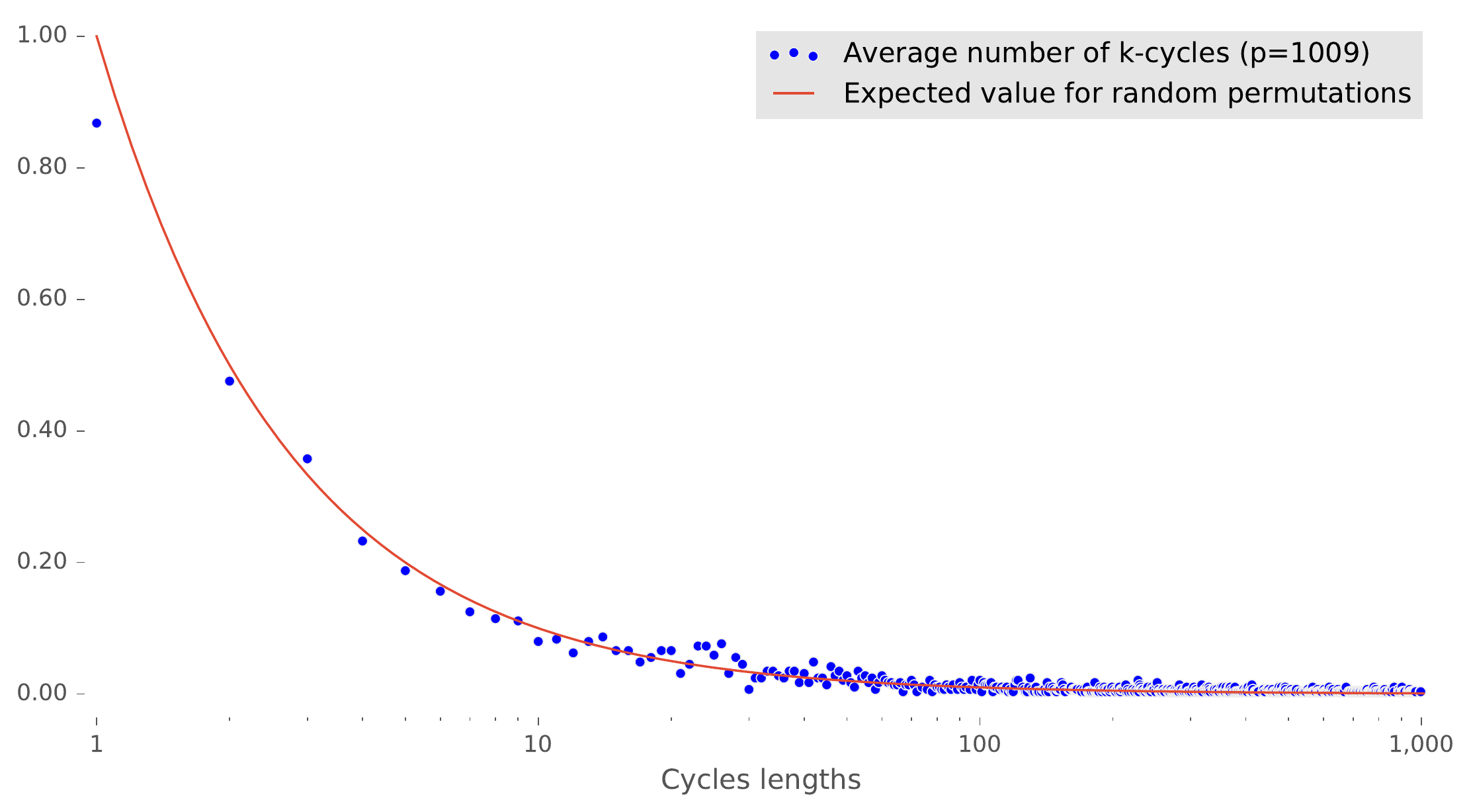}
\caption{Average number of $k$-cycles in in ElGamal functions on $\mathbb{F}_{1009}$}
\label{fig:kcycles}
\end{figure}

For the specific case $k=1$, the average number of fixed points
in random permutations is $1$. The results in Figure \ref{fig:kcycles} are very
close, by a small error margin. Therefore, to better illustrate this property, Figure 
\ref{fig:fixedpoints} shows the average number of fixed points for all generators 
in the multiplicative group for all prime numbers from $2$ to $2111$. As 
expected, the average of fixed points is closely distributed to the theoretical
value. We also note that by increasing $p$, the 
average of fixed points in the experiments gets closer to the expected theoretical
value.

\begin{figure}[ht!]
\centering
\includegraphics[width=\textwidth]{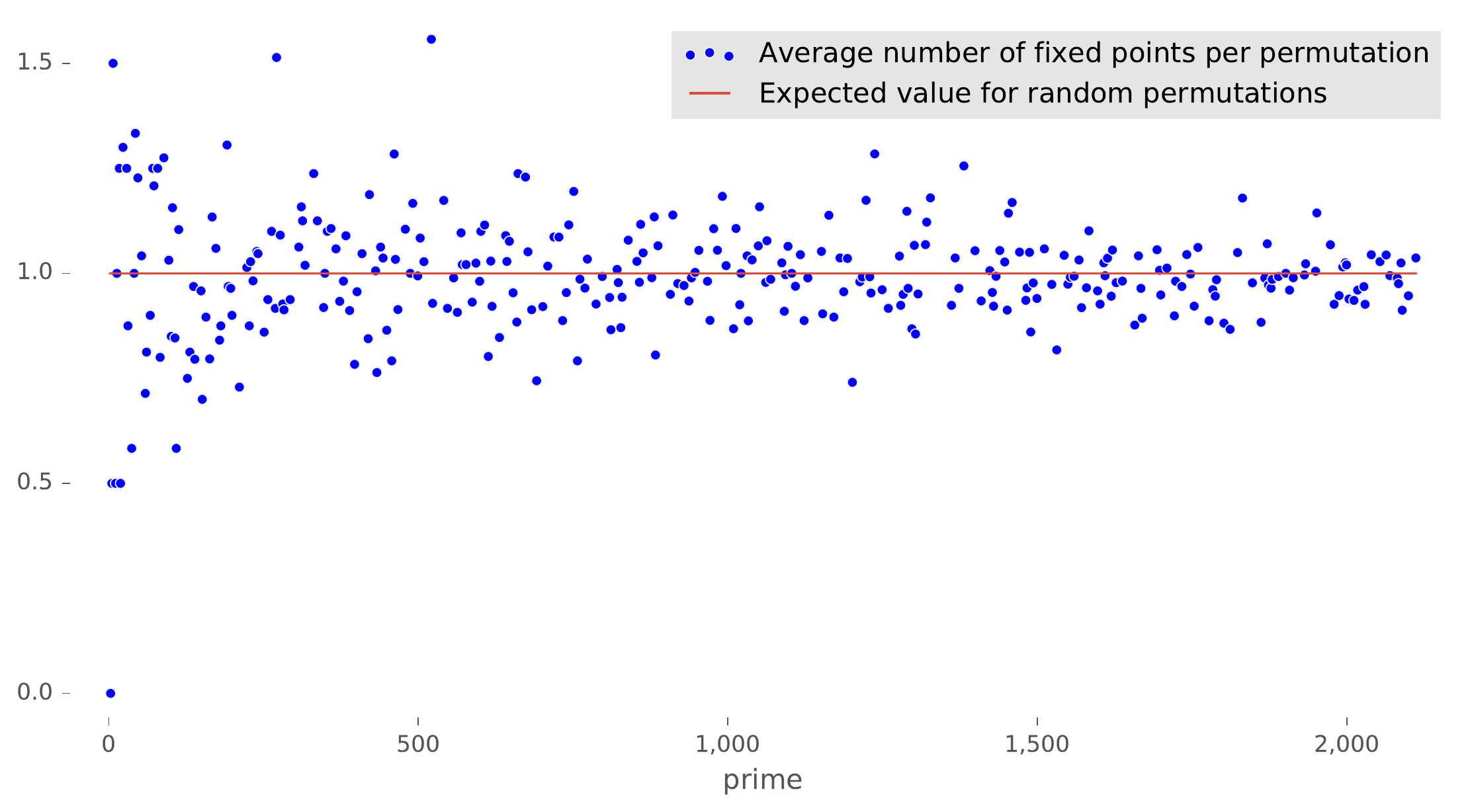}
\caption{Average number of fixed points for all generators of $\mathbb{F}_p$ with $2 \leq p \leq 2111$}
\label{fig:fixedpoints}
\end{figure}

\section{Sidon sets}
A subset $A$ of an abelian group $G$ (written additively) is a \textit{Sidon set} if for every $y\in G\setminus \{0\}$ there exists at most one pair $(a,b)\in A^2$ such that $y=a-b$. Clearly, for any set $A$ there are exactly $\#A$ pairs $(a,b)\in A^2$ for which $0=a-b$,
where  $\# A$ is the cardinality of $A$.

Let $p$ be a prime, $g\in \mathbb Z_p^{\times}$ a generator of the multiplicative group $\mathbb Z_p^{\times}$, 
and identify $\mathbb Z_{p-1}=\{0,1,\dots,p-2\}$ and $\mathbb Z_p=\{0,1,\dots,p-1\}$. 
We consider the additive group $G=\mathbb Z_{p-1}\times \mathbb Z_p$ (using the additive structure of both factors), and the subset 
\begin{equation}\label{eq:DefS}
S=\{(g^x,x): x\in \mathbb Z_{p-1}\}. 
\end{equation}
Thus $S$ is the graph of the discrete logarithm function modulo $p$, since $S=\{(y,\log_g y): y\in \mathbb Z_p\setminus\{0\} \}$,
and, after swapping the coordinates, the graph of the ElGamal function.

The following result is well known; see Cilleruelo \cite{cil12}, Example 2. We include a proof for the sake of completeness.
\begin{lem}\label{thm:Sidon}
The set $S$ in~\eqref{eq:DefS} is a Sidon set in $\mathbb Z_p\times\mathbb Z_{p-1}$.
\end{lem}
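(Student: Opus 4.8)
The plan is to verify the Sidon property directly from the definition. Suppose we have two pairs $((g^{x_1},x_1),(g^{x_2},x_2))$ and $((g^{x_3},x_3),(g^{x_4},x_4))$ in $S^2$ that produce the same nonzero difference, i.e.
\begin{equation*}
(g^{x_1}-g^{x_2},\, x_1-x_2) = (g^{x_3}-g^{x_4},\, x_3-x_4)
\end{equation*}
in $\Z_p\times\Z_{p-1}$. The goal is to conclude $x_1=x_3$ and $x_2=x_4$. Comparing second coordinates gives $x_1-x_2\equiv x_3-x_4\pmod{p-1}$, and comparing first coordinates gives $g^{x_1}-g^{x_2}\equiv g^{x_3}-g^{x_4}\pmod p$.

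First I would set $t=x_1-x_2\equiv x_3-x_4\pmod{p-1}$ and rewrite the first-coordinate equation by factoring: $g^{x_2}(g^{t}-1)\equiv g^{x_4}(g^{t}-1)\pmod p$, which is legitimate since $g^{x_1}=g^{x_2}g^{t}$ and $g^{x_3}=g^{x_4}g^{t}$ (the exponent $t$ only matters modulo $p-1$, which is exactly what we have). Now the key case distinction: if $t\not\equiv 0\pmod{p-1}$, then $g^t\neq 1$ in $\Z_p^\times$, so $g^t-1$ is a unit modulo $p$, and we may cancel it to get $g^{x_2}\equiv g^{x_4}\pmod p$, hence $x_2\equiv x_4\pmod{p-1}$ and then $x_1\equiv x_3\pmod{p-1}$, so the two pairs coincide. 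If instead $t\equiv 0\pmod{p-1}$, then $x_1=x_2$ and $x_3=x_4$ (as elements of $\{0,\dots,p-2\}$), so both differences equal $(0,0)$, contradicting the assumption that the difference is nonzero.

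The main thing to be careful about — rather than a deep obstacle — is the interaction between the two moduli: the exponents live in $\Z_{p-1}$ while the group elements $g^x$ live in $\Z_p^\times$, and one must check that reducing $t$ modulo $p-1$ does not change $g^t$, which holds precisely because $g$ has order $p-1$. The argument is otherwise a short cancellation, and the count of diagonal pairs (exactly $\#S=p-1$ pairs give difference $0$) is immediate since $x\mapsto(g^x,x)$ is injective. I would also remark that the statement is symmetric in the two coordinates, so it holds verbatim in $\Z_{p-1}\times\Z_p$ as well, matching the set $S$ defined in~\eqref{eq:DefS}.
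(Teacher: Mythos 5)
Your proof is correct and takes essentially the same route as the paper's: the key point in both is that when the exponent difference $t=v$ is nonzero modulo $p-1$, the element $g^{t}-1$ (equivalently $1-g^{-v}$) is invertible modulo $p$, which forces the pair to be unique, while the case $t\equiv 0 \bmod{p-1}$ yields the zero difference. The paper phrases this by solving explicitly $g^{c_1}\equiv(1-g^{-v})^{-1}u \bmod p$, whereas you cancel the common factor between two hypothetical pairs; this is a purely cosmetic difference.
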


\begin{proof} For some $(u,v)\neq (0,0)$ in $\mathbb Z_p\times\mathbb Z_{p-1}$
and $c_1, c_2 \in \Z_{p-1}$, suppose that $(g^{c_1},c_1)-(g^{c_2},c_2)= (u,v)$. Then
\begin{equation}\label{eq:Sidon}
% \left\{
\begin{array}{rcl}
c_1-c_2&\equiv& v \bmod{p-1},\\
g^{c_1}-g^{c_2}&\equiv& u \bmod p.
\end{array} 
% \right.
\end{equation}
In particular $v \not\equiv 0\bmod {p-1}$, since otherwise $u\equiv g^{c_1}-g^{c_1} \equiv 0 \bmod p$ which contradicts the assumption. 

From the first equation in~\eqref{eq:Sidon} we know that
\[g^{c_1-v} \equiv g^{c_2} \bmod p,\]
hence in the second equation we have
\[g^{c_1}\left(1-g^{-v}\right)\equiv u\bmod p.\]
Since $\left(1-g^{-v}\right)\not\equiv 0 \bmod p$ by the above,
we conclude that
% $c_1$ satisfies the following equation
\[g^{c_1}\equiv \left(1-g^{-v}\right)^{-1} u \bmod p,\]
and thus the pair $(c_1,c_2)$ is uniquely determined by $(u,v)$.
\end{proof}
 
\begin{lem}\label{thm:S(A)} Let $\varphi$ be a nontrivial character of $G=\mathbb Z_{p}\times\mathbb Z_{p-1}$ 
and $S$ be the set in~\eqref{eq:DefS}. Then
\[\Big|\sum_{a\in S}\varphi(a)\Big| < (3 (p-1))^{1/2}.\]
\end{lem}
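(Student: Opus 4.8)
The plan is to estimate the sum through its absolute value squared, exploiting the Sidon property from Lemma~\ref{thm:Sidon}. Write $T=\sum_{a\in S}\varphi(a)$. Since $\varphi$ is a homomorphism into the unit circle, $\overline{\varphi(b)}=\varphi(-b)$, so
\[
|T|^2=\sum_{a,b\in S}\varphi(a)\overline{\varphi(b)}=\sum_{a,b\in S}\varphi(a-b).
\]
I would then reorganize this double sum according to the value $y=a-b$ of the difference. By Lemma~\ref{thm:Sidon} every nonzero $y$ is represented by at most one pair $(a,b)\in S^2$, while $y=0$ is represented by exactly $\#S=p-1$ pairs, namely the diagonal. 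Hence
\[
|T|^2=(p-1)\,\varphi(0)+\sum_{y\in(S-S)\setminus\{0\}}\varphi(y)=(p-1)+\sum_{y\in(S-S)\setminus\{0\}}\varphi(y),
\]
and the task reduces to controlling the last sum.

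Next I would identify the difference set explicitly, recycling the computation in the proof of Lemma~\ref{thm:Sidon}. It is shown there that $(g^{c_1},c_1)-(g^{c_2},c_2)=(u,v)$ forces $v\neq 0$ and $u=g^{c_1}(1-g^{-v})$; conversely, for each fixed $v\in\mathbb{Z}_{p-1}\setminus\{0\}$ the element $1-g^{-v}$ is a nonzero element of $\mathbb{Z}_p$, and $g^{c_1}$ runs through all of $\mathbb{Z}_p^\times$ as $c_1$ varies, so $u$ runs through all of $\mathbb{Z}_p^\times$. Therefore
\[
(S-S)\setminus\{0\}=\mathbb{Z}_p^\times\times(\mathbb{Z}_{p-1}\setminus\{0\}),
\]
a Cartesian product. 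Writing a character of the product group as $\varphi(u,v)=\psi(u)\chi(v)$, with $\psi$ a character of $\mathbb{Z}_p$ and $\chi$ a character of $\mathbb{Z}_{p-1}$, the sum factors as
\[
\sum_{y\in(S-S)\setminus\{0\}}\varphi(y)=\Bigl(\sum_{u\in\mathbb{Z}_p^\times}\psi(u)\Bigr)\Bigl(\sum_{v\in\mathbb{Z}_{p-1}\setminus\{0\}}\chi(v)\Bigr).
\]

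Finally I would evaluate the two one-dimensional sums by orthogonality: for a character of a cyclic group $\mathbb{Z}_n$, the sum over all of $\mathbb{Z}_n$ is $n$ if the character is trivial and $0$ otherwise, so the sum over the nonzero elements is $n-1$ or $-1$ respectively. Because $\varphi$ is nontrivial, $\psi$ and $\chi$ are not both trivial, so the displayed product equals $(p-1)\cdot(-1)$, or $(-1)\cdot(p-2)$, or $(-1)\cdot(-1)$, according to which (if either) of $\psi,\chi$ is trivial. Substituting back yields $|T|^2\in\{0,1,p\}$, and in particular $|T|\le\sqrt p<\sqrt{3(p-1)}$, since $p<3(p-1)$ for every prime $p$; thus one even obtains slightly more than claimed. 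I do not expect a genuine obstacle here: the computation is short, and the only points needing care are retaining the multiplicity $p-1$ of the zero difference (dropping it would make $|T|^2$ come out negative, a handy consistency check) and remembering that a nontrivial character of $\mathbb{Z}_p\times\mathbb{Z}_{p-1}$ can still be trivial on one of the two factors, which is precisely the source of the three cases above.
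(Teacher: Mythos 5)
Your proof is correct, and its second half genuinely departs from the paper's argument. Both start identically: $|T|^2=\sum_{a,b\in S}\varphi(a-b)$, the Sidon property collapses the off-diagonal multiplicities to at most one, and the diagonal contributes $\#S=p-1$. The paper then stays generic: it converts the sum over $S-S$ into a sum over the complement via $\sum_{x\in G}\varphi(x)=0$, bounds that trivially by $\#G-\#(S-S)=2\#S-1$, and lands on $|T|^2\le 3\#S-2$, an argument that uses nothing about $S$ beyond its Sidon property and cardinality (and hence transfers to the broader family of dense Sidon sets treated in Cilleruelo--Zumalac\'arregui). You instead determine the difference set exactly, $(S-S)\setminus\{0\}=\mathbb Z_p^\times\times(\mathbb Z_{p-1}\setminus\{0\})$ --- which is right: the forward inclusion is the computation in Lemma~\ref{thm:Sidon}, and the converse works because for $v\ne 0$ the equation $g^{c_1}=u(1-g^{-v})^{-1}$ is solvable for every $u\ne 0$ --- then factor the character over the two coordinates and evaluate the sum over this Cartesian product in closed form, getting $|T|^2\in\{0,1,p\}$. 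Your case analysis (exactly one of $\psi,\chi$ may be trivial, not both) is the right care point, and the cardinality check $\#(S-S)=(p-1)(p-2)+1$ agrees with the paper's~\eqref{eq:Saux}. What your route buys is a sharper bound, $|T|\le p^{1/2}<(3(p-1))^{1/2}$; this is no accident, since in the case where both component characters are nontrivial your $T$ is (after substituting $u=g^x$) a classical Gauss sum, whose modulus is exactly $p^{1/2}$. What it costs is generality: it hinges on the explicit structure of this particular set, whereas the paper's weaker $3\#S-2$ bound is the one that generalizes.
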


\begin{proof}
Any nontrivial character $\varphi$ of $G$ satisfies
$\sum_{x\in G}\varphi(x)=0$.
Thus, for the set $S-S=\{x\in G:\, x=a-b\text{ for some }a,b\in S\}$ we have
\begin{equation}\label{S-S}
\sum_{x\in S-S}\varphi(x)= - \sum_{x\notin S-S}\varphi(x).
\end{equation}

Since $|z|=(z\cdot \overline{z})^{1/2}$ for a complex number $z$ and $\overline{\varphi(x)}=\varphi(-x)$ for every $x\in G$,
where $\overline z$ denotes the complex conjugate of $z$, it follows that
\begin{align}
\Big|\sum_{a\in S}\varphi(a)\Big|^2& = \Big(\sum_{a\in S}\varphi(a)\Big)\Big(\sum_{b\in S}\varphi(-b)\Big) %\nonumber\\
=\sum_{a,b\in S}\varphi(a-b)\nonumber\\
&=\sum_{y\in G}\varphi(y)\cdot \#\{(a,b)\in S^2:\, y=a-b\}\label{eq:S1}.
\end{align}
Since $S$ is a Sidon set by Lemma~\ref{thm:Sidon}, we know that
\[
\#\{(a,b)\in S^2:\, y=a-b\}=\left\{ \begin{array}{cl} 
\#S& \text{if }y=0,\\
1& \text{if }y\in S-S\setminus\{0\},\\
0&\text{otherwise.}
\end{array}\right.
\]
Thus
\begin{align}
\Big|\sum_{a\in S}\varphi(a)\Big|^2& = \#S -1  + \sum_{y\in S-S}\varphi(y) \nonumber \\
& = \#S - 1 - \sum_{y\notin S-S}\varphi(y)\nonumber\\
& \le \#S - 1 +\Big|\sum_{y\notin S-S}\varphi(y)\Big|.\label{eq:S2}
\end{align}
Luckily, we have a complete description of the set $S-S$, since every pair $(a,b)\in S^2$ 
is uniquely determined by the difference $a-b$ unless $a-b=0$, for which we have exactly $\#S = p-1$ options; hence
\begin{equation}\label{eq:Saux}
\#(S-S) = (\#S)^2 - \#S + 1 = (p-1)^2-(p-1)+1 = \#G -2\#S+1
\end{equation}
since $\#G=p(p-1)$. Clearly we have from~\eqref{eq:Saux} that
\begin{equation}\label{eq:S3}
\Big|\sum_{y\notin S-S}\varphi(y)\Big|\le \#G - \#(S-S) = 2\#S -1.
\end{equation}

Combining equations~\eqref{eq:S1},~\eqref{eq:S2} and~\eqref{eq:S3} we have 
\[\Big|\sum_{a\in S}\varphi(a)\Big|^2 \le 3\#S - 2 ,\]
which concludes the proof.
\end{proof}

The following classical result is only included here for the sake of completeness.
\begin{lem}\label{thm:sum}
Let $n$ and $N$ be positive integers with $1\leq N<n$. Then, for any integer $h$
\[\sum_{0\leq a <n}\Big|\sum_{h\leq x <N+h}\exp(2\pi iax/n)\Big|<5n\log n.\]
\end{lem}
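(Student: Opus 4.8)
The plan is to compute the inner sum explicitly as a geometric series and then bound the resulting sum over $a$ by comparison with a harmonic sum. First, for a fixed $a$ the change of variable $x\mapsto x-h$ multiplies the inner sum by the unit-modulus factor $\exp(2\pi iah/n)$, so
\[
\Big|\sum_{h\leq x<N+h}\exp(2\pi iax/n)\Big|=\Big|\sum_{0\leq x<N}\exp(2\pi iax/n)\Big|,
\]
and in particular the left-hand side does not depend on $h$; this reduces the claim to the case $h=0$. The term $a=0$ contributes exactly $N$, which is at most $n$.

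For $1\leq a<n$, write $\zeta=\exp(2\pi ia/n)\neq 1$; the inner sum is then $(\zeta^{N}-1)/(\zeta-1)$, whose modulus is at most $2/|\zeta-1|=1/|\sin(\pi a/n)|$ because $|\zeta-1|=2|\sin(\pi a/n)|$. Hence the whole left-hand side of the lemma is at most $n+\Sigma$, where $\Sigma=\sum_{1\leq a<n}1/\sin(\pi a/n)$ is a sum of positive terms.

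To bound $\Sigma$, I would use the symmetry $\sin(\pi a/n)=\sin(\pi(n-a)/n)$ to reduce to the range $1\leq a\leq n/2$ (doubling the partial sum, which only overcounts a possible middle term and so is harmless for an upper bound), and there the concavity estimate $\sin(\pi t)\geq 2t$ on $[0,1/2]$ gives $1/\sin(\pi a/n)\leq n/(2a)$. Therefore
\[
\Sigma\leq n\sum_{1\leq a\leq n/2}\frac1a\leq n\bigl(1+\log(n/2)\bigr)\leq n(1+\log n).
\]
Combining, the left-hand side of the lemma is at most $n+n(1+\log n)<2n+n\log n$. Since the hypothesis $1\leq N<n$ forces $n\geq 2$, we have $\log n>1/2$, hence $2n<4n\log n$ and $2n+n\log n<5n\log n$, which is the desired bound.

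The argument is entirely elementary, and the only real point of care is tracking the numerical constant: I deliberately use the crude bounds above (rather than the sharper asymptotics $\sum 1/\sin(\pi a/n)=\tfrac{2}{\pi}n\log n+O(n)$) precisely because the resulting slack keeps everything strictly below $5n\log n$ for every admissible $n$, with no case analysis beyond the observation that $n\geq 2$.
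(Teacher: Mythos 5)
Your proof is correct and takes essentially the same route as the paper: reduce to $h=0$ by factoring out a unit-modulus phase, bound the geometric series for $a\neq 0$ by $2/|\zeta-1|=1/\sin(\pi a/n)$, use the concavity bound $\sin(\pi t)\ge 2t$ on $[0,1/2]$ together with the symmetry $a\leftrightarrow n-a$, and finish with the harmonic-sum estimate $\sum_{1\le a\le n/2}1/a\le 1+\log n$. Your slightly tighter bookkeeping (keeping the factor $\tfrac12$ from the sine bound, giving $n\sum 1/a$ rather than the paper's $2n\sum 1/a$) even makes the final comparison with $5n\log n$ go through uniformly for all $n\ge 2$, where the paper's looser chain implicitly needs $n\ge 3$.
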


\begin{proof}Without loss of generality we will assume that $h=0$, since
\begin{align*}
|\sum_{h\leq x <N+h}\exp(2\pi iax/n)\Big|&=|\sum_{0\leq x <N}\exp(2\pi ia(x+h)/n)\Big|\\
&=\Big|\exp(2\pi iah/n)\sum_{0\leq x <N}\exp(2\pi iax/n)\Big|\\
&=\Big|\sum_{0\leq x <N}\exp(2\pi iax/n)\Big|.
\end{align*}

The contribution of $a=0$ to the sum is precisely $N<n$. 

Observe that for a given $1\le a\le n-1$ the sum 
\[\sum_{0\leq x \leq N}\exp(2\pi iax/n)=1+\exp(2\pi iax/n)+\cdots + \exp(2\pi iax/n)^{N-1}\]
is in fact a geometric progression with ratio $q = \exp(2\pi ia/n) \ne 1$ thus
\[\sum_{0\leq x \leq N}\exp(2\pi iax/n) = \left| \frac{q^N-1}{q-1}\right| \le \frac{2}{|q-1|}.\]
We have
\[
|q-1| = |\exp(2\pi i a/n)-1| = |\exp(\pi i a/n) - \exp(- \pi i a/n)| \\
  = 2 |\sin( \pi  a/n)|.\]
Then
\[|\sin( \pi a/n)| = |\sin( \pi  (a-n)/n)| \ge \frac{2 \min\{a, n-a\}}{n}\]
because $\sin(\alpha) \ge 2\alpha/\pi$ for $0 \le \alpha \le \pi/2$. 
Therefore 
\begin{align}
\sum_{0\leq a < n} \Big|\sum_{0\leq x < N}\exp(2\pi iax/n)\Big| & \le\,  N+  \sum_{0<a < n}\frac{n}{  \min\{a, n-a\}}\nonumber \\
& \le N+2n \sum_{1\le a \leq n/2}\frac{1}{a}.\label{eq:PolyaV}
\end{align}
The proof follows from~\eqref{eq:PolyaV} and the inequality
\[\sum_{1\le a \leq n/2}\frac{1}{a} < 1 +\log(n) ,\]
which holds for any integer $n\ge 2$.
\end{proof}

\begin{thm}\label{thm:principal}
Let $S=\{(g^x,x):x\in \mathbb Z_{p-1}\}$. For any box 
$B=[h+1 \twodots h+N]\times[k+1 \twodots k+M]\subseteq \mathbb Z_p\times\mathbb Z_{p-1}$ we have
\[\left|\#(S\cap B) - \frac{\#B}{p}\right|\le 50 p^{1/2}\log^2p.\] 
Furthermore, if $\#B\in \omega (p^{3/2}\log^2 p)$, then $\#(S\cap B)\sim \#B/p$.
\end{thm}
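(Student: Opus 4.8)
The plan is a standard completion (Fourier) argument that simply assembles the three lemmas above. First I would expand the indicator function $\mathbf 1_B$ of the box over the dual of $G=\mathbb Z_p\times\mathbb Z_{p-1}$, whose characters are $\varphi_{a,b}(u,v)=\exp(2\pi iau/p)\exp(2\pi ibv/(p-1))$ for $(a,b)\in\mathbb Z_p\times\mathbb Z_{p-1}$. By Fourier inversion on each cyclic factor,
\[
\mathbf 1_B(u,v)=\frac1{p(p-1)}\sum_{a\in\mathbb Z_p}\sum_{b\in\mathbb Z_{p-1}}C_aD_b\,\varphi_{a,b}(u,v),
\]
where $C_a=\sum_{h<x\le h+N}\exp(-2\pi iax/p)$ and $D_b=\sum_{k<y\le k+M}\exp(-2\pi iby/(p-1))$. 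Summing this identity over $(u,v)\in S$ and interchanging the order of summation gives
\[
\#(S\cap B)=\frac1{p(p-1)}\sum_{a,b}C_aD_b\sum_{s\in S}\varphi_{a,b}(s).
\]

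Next I would isolate the term $(a,b)=(0,0)$: there $C_0=N$, $D_0=M$, and $\sum_{s\in S}\varphi_{0,0}(s)=\#S=p-1$, which produces exactly the main term $NM/p=\#B/p$. For every other term the character $\varphi_{a,b}$ is nontrivial, so Lemma~\ref{thm:S(A)} bounds the inner sum over $S$ by $(3(p-1))^{1/2}$. Pulling this factor out and enlarging the index set back to all of $\mathbb Z_p\times\mathbb Z_{p-1}$ (the reinstated term is nonnegative), the error is at most
\[
\frac{(3(p-1))^{1/2}}{p(p-1)}\Big(\sum_{a\in\mathbb Z_p}|C_a|\Big)\Big(\sum_{b\in\mathbb Z_{p-1}}|D_b|\Big).
\]
Now Lemma~\ref{thm:sum}, applied once with $n=p$ and once with $n=p-1$, gives $\sum_a|C_a|<5p\log p$ and $\sum_b|D_b|<5(p-1)\log(p-1)<5(p-1)\log p$; the boundary cases $N=p$ or $M=p-1$, not covered by the hypothesis $N<n$ of Lemma~\ref{thm:sum}, are handled directly since then the interval fills the whole modulus and $\sum_a|C_a|=p$ (resp.\ $\sum_b|D_b|=p-1$). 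Multiplying out, the error is at most $25\cdot3^{1/2}(p-1)^{1/2}\log^2p<50\,p^{1/2}\log^2p$, which is the claimed inequality.

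Finally, for the asymptotic statement, if $\#B=NM\in\omega(p^{3/2}\log^2p)$ then $\#B/p\in\omega(p^{1/2}\log^2p)$, which grows strictly faster than the error term $50\,p^{1/2}\log^2p$; hence $\#(S\cap B)=\#B/p+O(p^{1/2}\log^2p)=(1+o(1))\,\#B/p$, i.e.\ $\#(S\cap B)\sim\#B/p$. I do not expect a genuine obstacle here: all the substance of the theorem is already packed into the Sidon property of $S$ (Lemma~\ref{thm:Sidon}) and the resulting square-root cancellation (Lemma~\ref{thm:S(A)}), and the step above is essentially bookkeeping — the only care required is in tracking the constants ($25\sqrt3<50$ and $(p-1)^{1/2}<p^{1/2}$) and in dealing with the degenerate boxes where one of the two intervals equals the full modulus.
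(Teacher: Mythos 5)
Your proposal is correct and follows essentially the same argument as the paper: expand over the characters of $\mathbb Z_p\times\mathbb Z_{p-1}$, split off the trivial character to get the main term $\#B/p$, bound the nontrivial sums over $S$ by Lemma~\ref{thm:S(A)} and the incomplete geometric sums by Lemma~\ref{thm:sum}, yielding the error $25\sqrt{3}\,(p-1)^{1/2}\log^2 p<50\,p^{1/2}\log^2 p$. The only (harmless) difference is cosmetic --- Fourier inversion of $\mathbf 1_B$ rather than the orthogonality identity --- plus your explicit treatment of the degenerate boxes with $N=p$ or $M=p-1$, which the paper glosses over.
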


Here, \[\omega(f) =\{g \colon \mathbb R \to \mathbb R^{+}\, \colon
g(x)/|f(x)| \to 0\text{ if } x\to \infty\}\]
for some $f\colon \mathbb R \to \mathbb R^{+}$.

\begin{proof}
By the orthogonality of characters and separating the contribution of the trivial character $\varphi_0 = 1$, we have
\begin{align*}
\#(S\cap B) &= \frac{1}{p(p-1)}\sum_{\varphi}\sum_{a\in S}\sum_{b\in B}\varphi(a-b)\\
&=\frac{\#B}{p}+\frac{1}{p(p-1)}\sum_{\varphi\neq \varphi_0}\sum_{a\in S}\sum_{b\in B}\varphi(a-b).
\end{align*}
Thus
\begin{align}
\Big|\#(S\cap B) -\frac{\#B}{p}\Big| &= \frac{1}{p(p-1)}\Big|\sum_{\varphi\neq \varphi_0}\sum_{a\in S}\sum_{b\in B}\varphi(a-b)\Big|\nonumber\\
& \le \frac{1}{p(p-1)}\sum_{\varphi\neq \varphi_0}\Big|\sum_{a\in S}\varphi(a)\Big|\Big|\sum_{b\in B}\varphi(b)\Big|\nonumber \\
&\le \frac{1}{p(p-1)}\Big(\max_{\varphi\ne\varphi_0}\Big|\sum_{a\in S}\varphi(a)\Big|\Big) \sum_{\varphi\neq \varphi_0}\Big|\sum_{b\in B}\varphi(b)\Big|.\label{eq:final}
\end{align}
The characters of $G$ act as follows:
\[\varphi((x,y))=\exp\big(2\pi i\big(\tfrac{sx}{p}+\tfrac{ty}{p-1}\big)\big),\qquad \text{for some }(s,t)\in G.\]
Hence we have
\begin{align*}
\sum_{\varphi\neq \varphi_0}\Big|\sum_{b\in B}\varphi(b)\Big|\le & \bigl(\sum_{0 \leq s < p}
|\sum_{h < x \leq h+N}\exp(2\pi i sx/p) |\bigr) \\
& \times
\bigl(\sum_{0 \leq t < p-1}| \!\! \sum_{k < y \leq k+M}\exp(2\pi i ty/(p-1))|\bigr),
\end{align*}
which implies, by Lemma \ref{thm:sum}, that
\begin{equation}\label{eq:Box}
\sum_{\varphi\neq \varphi_0}\Big|\sum_{b\in B}\varphi(b)\Big|< 25p(p-1)\log^2 p.
\end{equation}
By Lemma~\ref{thm:S(A)},
\[\max_{\varphi\ne\varphi_0}\Big|\sum_{a\in S}\varphi(a)\Big|<\sqrt{3(p-1)},\]
which combined with~\eqref{eq:Box} in~\eqref{eq:final} concludes the proof.
\end{proof}

One can show, with a bit more of work, see Cilleruelo \&\ Zumalac\'arregui~\cite{cilzum16}, that in fact 
\[\left|\#(S\cap B) - \frac{\#B}{p}\right|\in O\bigl(p^{1/2}\log_+^2(|B|p^{-3/2})\bigr),\]
which extends slightly the asymptotic range for $\#B$, where our ``big-Oh'' notation,
for a real function $f\colon \mathbb R \to \mathbb R^{+}$, $O(f)$ denotes the following set of functions:
\[O(f)=\left\{g\colon \mathbb R \to \mathbb R |\, \exists\ C>0\ \text{ with } |g(x)|\le Cf(x) \text{ for sufficiently large }x\right\},\]
and $\log_+(x) = \max \{\ln (x), 1\}$ for $x \in \mathbb R^+$.
The implied asymptotics are for growing $p$.
In fact, in~\cite{cilzum16} this result was obtained for a much larger family of dense Sidon sets. 

Igor Shparlinski has pointed out to us that one can obtain similar asymptotic results
with the exponential sum machinery. However, that method is unlikely to yield explicit estimates,
without ``O''-term.

\section{Conclusion}
We have shown, both experimentally and theoretically, some randomness properties
of the ElGamal function over $\Z_p$ for a prime $p$. Many questions along these lines remain open:
\begin{itemize}
\item
stronger results, perhaps even pseudorandomness,
\item
other groups for $G$, for example, elliptic curves,
\item
similar questions about the Schnorr function, where $G$ is a ``small'' subgroup of
a ``large'' group $ \mathbb Z_p$.
\end{itemize}

\section{Acknowledgements}
Part of this work was done by the first three authors at LabSEC of the Universidade
Federal de Santa Catarina, Brazil. We thank Daniel Panario for providing
background material, and Ricardo Cust\'odio for making our collaboration
possible.
The collaboration with Ana Zumalac\'arregui was begun at a meeting
at the CIRM, Luminy, France, whose support we gratefully acknowledge.
The second author was supported by the B-IT Foundation and the Land Nordrhein-Westfalen. The last author was supported by Australian Research Council Grants DP140100118.

\bibliographystyle{plain}
\bibliography{references}

\vspace{2ex}

Author addresses:

 Lucas Boppr\'e Niehues and Lucas Pandolfo Perin, LabSEC, Universidade
Federal de Santa Catarina, Brazil.\\ {\tt lucasboppre@gmail.com} and {\tt lucas.perin@posgrad.ufsc.br}

\smallskip

 Joachim von zur Gathen, B-IT, Universit\"at Bonn, Germany.\\ {\tt gathen@bit.uni-bonn.de}
 
 \smallskip
 
 Ana Zumalac\'arregui, University of New South Wales, Sydney, Australia.\\ {\tt a.zumalacarregui@unsw.edu.au}
\end{document}